\subjclass[2010]{Primary 37D30; Secondary 37D45.}
\keywords{Anosov Flow, Sectional-Anosov Flow, Sensitive, Essentially hyperbolic, $3$-manifold.}
\thanks{Partially supported by CNPq, FAPERJ and PRONEX/DS from Brazil.}
\newtheorem{theorem}{Theorem}[section] 
\newtheorem{lemma}[theorem]{Lemma}     
\newtheorem{corollary}[theorem]{Corollary}
\newtheorem{defi}[theorem]{Definition}
\title[On the essential hyperbolicity of sectional-Anosov flows]
 {On the essential hyperbolicity of sectional-Anosov flows} 
\author{S. Bautista, C. A. Morales}
\address{Departamento de Matem\'aticas, Universidad Nacional de Colombia, Bogot\'a, Colombia.}
\address{Instituto de Matem\'atica, Universidade Federal do Rio de Janeiro, P. O. Box 68530, 21945-970 Rio
de Janeiro, Brazil.}
\email{sbautistad@unal.edu.co, morales@impa.br}
\thanks{SB was partially supported by the Universidad Nacional de Colombia, Bogot\'a, Colombia.\\
CAM was partially supported by CNPq, FAPERJ and PRONEX/Dynam. Sys. from Brazil
and the Universidad Nacional de Colombia from Colombia.
CAM would like to thank the Universidad Nacional de Colombia, Bogot\'a, Colombia, for its kindly hospitality during the preparation of this paper.}
\begin{document}
\maketitle

\begin{abstract}
We prove that every sectional-Anosov flow of a compact $3$-manifold $M$ exhibits
a finite collection of hyperbolic attractors and singularities whose basins form a dense subset of $M$.
Applications to the dynamics of sectional-Anosov flows on compact $3$-manifolds include a
characterization of essential hyperbolicity,
sensitivity to the initial conditions (improving \cite{ams}) and
a relationship between the topology of the ambient manifold and the denseness
of the basin of the singularities.
\end{abstract}

\section{Introduction}

\noindent
A smooth vector field on a differentiable manifold is called {\em essentially hyperbolic} if
it exhibits a finite collection of hyperbolic attractors whose basins form an open and dense subset of
the manifold \cite{a}, \cite{cp}.
Basic examples are the Axiom A ones (by the spectral decomposition theorem \cite{kh}, \cite{pt}), 
including the {\em Anosov flows}, but not the
{\em geometric Lorenz attractor} \cite{abs}, \cite{gw}.
On the other hand, there is a class of systems, the {\em sectional-Anosov flows} \cite{m1}, whose
representative examples are
the Anosov flows, the geometric Lorenz attractors, the saddle-type hyperbolic attracting sets,
the {\em multidimensional Lorenz attractors} \cite{bpv} and the examples in
\cite{mm}, \cite{mmm}.
They motivate search of necessary and sufficient conditions for
a sectional-Anosov flows to be essentially hyperbolic, or, if there is a sort of essential hyperbolicity for them.
At first glance it is tempting to say that for every sectional-Anosov flow of a compact manifold there is a finite collection of
sectional-hyperbolic attractors whose basins form an open and dense subset.
However, this is false even in dimension three as shown \cite{bm}, \cite{bmp}.
Nevertheless, as proved in \cite{cm},
every vector field close to a transitive sectional-Anosov flow with singularities of a compact $3$-manifold
satisfies that generic points have a singularity in their omega-limit set.
This was improved later in \cite{ams} by proving that all such vector fields
satisfy that {\em the basin of the singularities is dense in the manifold}.
In general, we can
combine \cite{ams} and \cite{mp} to obtain that, for every compact $3$-manifold $M$,
there is a $C^1$ open and dense subset of sectional-Anosov vector fields all of
whose elements exhibit a finite collection of hyperbolic attractors and singularities
whose basins form a dense subset of $M$.
In this paper we strengthen this last assertion by proving that
{\em every} sectional-Anosov flow of {\em every} compact $3$-manifold $M$ exhibits a finite collection of hyperbolic attractors
and singularities whose basins form a dense subset of $M$.
This fact has some consequences in the study of the dynamics of the sectional-Anosov flows $X$
on compact $3$-manifolds.
The first one is that $X$ is essentially hyperbolic if and only if
the basin of its set of singularities is nowhere dense.
Another application is related to a result in \cite{ams} asserting that
every vector field of a compact $3$-manifold that is
$C^1$ close to a nonwandering sectional-Anosov flow is sensitive to the initial conditions.
Indeed, we extend this result by proving that {\em every} sectional-Anosov flows on {\em every} compact $3$-manifold
is sensitive to the initial conditions.
Finally, we prove that
every sectional-Anosov flow with singularities (all Lorenz-like)
but without null homotopic periodic orbits  of a compact atoroidal $3$-manifold $M$
satisfies that the basin of the set of singularities is dense in $M$.
Let us state our results in a precise way.

Consider a compact manifold  $M$ with possibly nonempty boundary $\partial M$.
To indicate its dimension $n$ we will call it $n$-manifold.
Consider also a vector field $X$ with induced flow $X_t$ on $M$,
inwardly transverse to $\partial M$ if $\partial M\neq\emptyset$ (all vector fields in this paper will be assumed to be $C^1$).
Define the {\em maximal invariant set} of $X$
$$
M(X)=\displaystyle\bigcap_{t\geq0}X_t(M).
$$
We say that  $\Lambda\subset M(X)$ is {\em invariant}
if $X_t(\Lambda)=\Lambda$ for every $t\in \mathbb{R}$.
Given $x\in M$ we define the {\em omega-limit set},
$$
\omega(x)=\left\{y\in M:y=\lim_{k\to\infty}X_{t_k}(x)\mbox{ for some sequence }t_k\to\infty\right\}.
$$
Define the {\em basin (of attraction)} of any subset $B\subset M$ as the set of points $x\in M$ such that
$\omega(x)\subset B$.
An invariant set $\Lambda$ is {\em transitive} if $\Lambda=\omega(x)$ for some $x\in\Lambda$.
An {\em attractor} of $X$ is transitive set $A$ for which there is a compact neighborhood $U$ satisfying
$$
A=\displaystyle\bigcap_{t\geq0}X_t(U).
$$
The {\em nonwandering set} $\Omega(X)$ of $X$ is defined as the set of points $x\in M$ such that
for every neighborhood $U$ of $x$ and $T>0$ there is $t>T$ satisfying
$X_t(U)\cap U\neq\emptyset$.
Clearly $\omega(x)\subset \Omega(X)\subset M(X)$ for every $x\in M$.
By a {\em singularity} of $X$ we mean a point $\sigma\in M$ satisfying $X(\sigma)=0$.

\begin{defi}
A compact invariant set $\Lambda$ of $X$ is {\em hyperbolic}
if there are a decomposition $T_\Lambda M=E^s_\Lambda\oplus E^X_\Lambda\oplus E^u_\Lambda$
of the tangent bundle over $\Lambda$ as well as positive constants $K,\lambda$
and a Riemannian metric $\|\cdot\|$ on $M$ satisfying
\begin{enumerate}
\item
$\|DX_t(x)/E^s_x|\leq Ke^{-\lambda t}$, for every $x\in \Lambda$ and $t\geq0$.
\item
$E^X_\Lambda$ is the subbundle generated by $X$.
\item
$m(DX_t(x)/E^u_x)\geq K^{-1}e^{\lambda t}$, for every $x\in\Lambda$ and $t\geq0$ where
$m(\cdot)$ indicates the conorm operation.
\end{enumerate}
If $E^s_x\neq0$ and $E^u_x\neq0$ for all $x\in \Lambda$ we will say that $\Lambda$
is a {\em saddle-type hyperbolic set}.
A {\em hyperbolic attractor} is an attractor which is simultaneously a hyperbolic set.
A singularity  $\sigma$ of $X$ is {\em hyperbolic} if it is hyperbolic as a compact invariant set, or,
equivalently, if the linear map $DX(\sigma)$ has no purely imaginary eigenvalues.
\end{defi}

\begin{defi}[\cite{memo}]
A compact invariant set $\Lambda$ of $X$ is {\em sectional-hyperbolic}
if there are a decomposition $T_\Lambda M=E^s_\Lambda\oplus E^c_\Lambda$
of the tangent bundle over $\Lambda$ as well as positive constants $K,\lambda$
and a Riemannian metric $\|\cdot\|$ on $M$ satisfying
\begin{enumerate}
\item
$\|DX_t(x)/E^s_x\|\leq Ke^{-\lambda t}$ for every $x\in \Lambda$ and $t\geq 0$.
\item
$\frac{\|DX_t(x)/E^s_x\|}{m(DX_t(x)/E^c_x)}\leq Ke^{-\lambda t}$, for every $x\in\Lambda$ and
$t\geq0$.
\item
$
|\det(DX_t(x)/L_x)|\geq K^{-1}e^{\lambda t}
$
for every $x\in \Lambda$, $t\geq0$ and every two-dimensional subspace $L_x$ of $E^c_x$.
\end{enumerate}
\end{defi}

\begin{defi}[\cite{m1}]
A {\em sectional-Anosov flow} is a vector field whose maximal invariant set is sectional-hyperbolic.
\end{defi}

The following definition describes a certain type of singularities for sectional-Anosov flows.

\begin{defi}
We say that a singularity $\sigma$ of a vector field $X$ on a $3$-manifold $M$ is {\em Lorenz-like}
if, up to some order, the eigenvalues $\{\lambda_1,\lambda_2,\lambda_3\}$ of $DX(\sigma):T_\sigma M\to T_\sigma M$
satisfy the eigenvalue condition $\lambda_2<\lambda_3<0<-\lambda_3<\lambda_1$.
\end{defi}

A sectional-Anosov flow with singularities of a compact $3$-manifold may have Lorenz-like singularities or not \cite{bm}, \cite{bm3}, \cite{mmm}.
With these definitions we can state our main theorem.

\begin{theorem}
\label{thAA}
For every sectional-Anosov flow of a compact $3$-manifold $M$ there is a finite collection of hyperbolic attractors and Lorenz-like
singularities whose basins form a dense subset of $M$.
\end{theorem}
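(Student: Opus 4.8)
The plan is to sort the points of $M$ by the behavior of their omega-limit set relative to the singular set, and to show that in every case the point is approximated by points lying in the basin of one of finitely many hyperbolic attractors or Lorenz-like singularities. I would first record the finiteness and geometry of the singular set. Since $M(X)$ is sectional-hyperbolic, every singularity is hyperbolic, hence isolated, so $\operatorname{Sing}(X)=\{\sigma_1,\dots,\sigma_s\}$ is finite. On a $3$-manifold the splitting $E^s\oplus E^c$ at each $\sigma_i$ must have $\dim E^s=1$ and $\dim E^c=2$; the stable contraction, the sectional expansion of $E^c$, and the domination of $E^s$ by $E^c$ then force the eigenvalues of $DX(\sigma_i)$ to satisfy $\lambda_2<\lambda_3<0<-\lambda_3<\lambda_1$, so each $\sigma_i$ is Lorenz-like, with $W^u(\sigma_i)$ a curve and $W^s(\sigma_i)$ a surface.

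Next I would produce the attractors. Working inside the attracting set $M(X)$, I consider the terminal (i.e.\ attracting) chain-recurrence classes. Invoking the standard fact that a sectional-hyperbolic set with no singularity is hyperbolic, each terminal class either contains some $\sigma_i$—a singular sectional-hyperbolic attractor—or is disjoint from $\operatorname{Sing}(X)$ and is therefore hyperbolic; being terminal and chain transitive, the latter is a genuine hyperbolic attractor in the sense of the paper. Finiteness of the resulting collection $A_1,\dots,A_k$ follows from the disjoint isolating neighborhoods of uniform size available in the compact manifold $M$.

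It then remains to prove density of $B=B(\operatorname{Sing}(X))\cup\bigcup_{j}B(A_j)$, where $B(\cdot)$ denotes the basin. Given a nonempty open set $U$, I would pick $x\in U$ and push its orbit down the chain hierarchy: after replacing $x$ by a nearby point of $U$—legitimate, via Baire's theorem, because the stable sets of the non-terminal saddle-type hyperbolic classes are nowhere dense—the omega-limit lands in a terminal class. If that class is some $A_j$, then $x\in B(A_j)$. If it is a singular attractor, I must instead show that $B(\operatorname{Sing}(X))$ is dense near $x$, and this is the step I expect to be the main obstacle. Here the Lorenz-like geometry is decisive: the surfaces $W^s(\sigma_i)\subset B(\operatorname{Sing}(X))$ are spread densely through the region by a $\lambda$-lemma argument along the unstable curves $W^u(\sigma_i)$ combined with the Poincar\'e returns to singular cross-sections. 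This is exactly the density of the basin of the singularities proved, under a genericity hypothesis, in \cite{ams}; the essential difficulty—and the improvement of the present theorem over the combination of \cite{ams} and \cite{mp}—is to establish this accumulation for \emph{every} sectional-Anosov flow, controlling the strong-stable foliation and the return maps with no perturbation allowed. Granting it, $U\cap B\neq\emptyset$, so $B$ is dense, as claimed.
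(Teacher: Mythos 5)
Your proposal has two genuine gaps, one of which is the heart of the theorem. First, the claim that sectional hyperbolicity forces every singularity to be Lorenz-like is false: on a $3$-manifold the sectional expansion of the two-dimensional bundle $E^c$ only yields $\lambda_1+\lambda_3>0$ for the two eigenvalues associated with $E^c_\sigma$; it does not force $\lambda_3<0$, and the paper explicitly records (citing \cite{bm}, \cite{bm3}, \cite{mmm}) that a sectional-Anosov flow of a compact $3$-manifold may have non-Lorenz-like singularities. The actual proof deals with these by observing that the union of the stable manifolds of the non-Lorenz-like singularities is nowhere dense, so they can be dropped from the dense set at the end; your argument never confronts them, and as written it would ``prove'' the stronger (false) assertion that all singularities appearing in the statement can be taken to be all of $Sing(X)$ by fiat.

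Second, and decisively, the step you defer (``Granting it, $U\cap B\neq\emptyset$'') is precisely the technical content of the theorem. The paper does not argue via terminal chain classes at all. It fixes an intersection number $\delta$ (Lemma \ref{thedelta}, obtained from the connecting lemma of \cite{bm2}), shows that any $x\notin Cl(W^s(Sing(X)))$ with $\omega(x)\cap B_\delta(Sing(X))\neq\emptyset$ admits a singular partition of $\omega(x)$ (Lemma \ref{size}), and from this derives property (C)$_\delta$: every point whose omega-limit set meets $B_\delta(Sing(X))$ already lies in $Cl(W^s(Sing(X)))$ (Lemma \ref{ass2}). Consequently, for $x\notin Cl(W^s(Sing(X)))$ an entire neighborhood $U$ satisfies $\omega(y)\subset H_\delta$ for all $y\in U$, where $H_\delta$ is hyperbolic, and hyperbolic theory gives an open dense subset of $U$ inside basins of hyperbolic attractors; finiteness comes from Corollary \ref{**}. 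Your sketch---a $\lambda$-lemma along $W^u(\sigma_i)$ combined with Poincar\'e returns---is essentially Property (P) of \cite{ams}, which, as the introduction stresses, is only known for flows close to a nonwandering sectional-Anosov flow and is exactly what is unavailable here. Without a substitute for Lemmas \ref{thedelta}--\ref{ass2}, the proposal assumes the main point rather than proving it.
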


Applying this result we obtain easily the equivalence below.

\begin{corollary}
\label{coro2}
A sectional-Anosov flow $X$ of a compact $3$-manifold $M$ is essentially hyperbolic
if and only if the basin of the set of singularities of $X$ is nowhere dense in $M$.
\end{corollary}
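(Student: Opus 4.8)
The plan is to derive both implications directly from Theorem \ref{thAA} together with two elementary facts: that the basin of an attractor is open, and that a hyperbolic attractor of a flow carries no singularity. Throughout I write $W(\Lambda)$ for the basin of a subset $\Lambda\subset M$ and $\operatorname{Sing}(X)$ for the (necessarily finite) set of singularities of $X$. By Theorem \ref{thAA} I fix hyperbolic attractors $A_1,\dots,A_n$ and Lorenz-like singularities $\sigma_1,\dots,\sigma_m$ whose basins have dense union, and I set $O=\bigcup_{i=1}^n W(A_i)$, which is open since the basin of an attractor is open. As each $\sigma_j\in\operatorname{Sing}(X)$ we get $\bigcup_{j=1}^m W(\sigma_j)\subset W(\operatorname{Sing}(X))$, so $O\cup W(\operatorname{Sing}(X))$ is dense in $M$.

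For the implication ``$W(\operatorname{Sing}(X))$ nowhere dense $\Rightarrow$ essential hyperbolicity'' I would argue as follows. Since $O$ is open, $O\cup W(\operatorname{Sing}(X))$ is dense, and $W(\operatorname{Sing}(X))$ is nowhere dense, an elementary point-set argument shows that $O$ itself is dense: if some nonempty open $V$ satisfied $V\cap O=\emptyset$, then density of $O\cup W(\operatorname{Sing}(X))$ would force $W(\operatorname{Sing}(X))$ to be dense in $V$, whence $V\subset\cl(W(\operatorname{Sing}(X)))$, contradicting that the latter has empty interior. Thus $O=\bigcup_i W(A_i)$ is open and dense, so the attractors $A_i$ witness the essential hyperbolicity of $X$.

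For the converse, suppose $X$ is essentially hyperbolic with hyperbolic attractors $H_1,\dots,H_k$ whose basins form an open dense set $U=\bigcup_\ell W(H_\ell)$. The key point is that no $H_\ell$ contains a singularity. Indeed, the hyperbolic splitting $E^s\oplus E^X\oplus E^u$ has $\dim E^X=1$ at regular points and $\dim E^X=0$ at singularities; since $H_\ell=\omega(x)$ is connected and the splitting is continuous, a singularity in $H_\ell$ would force $H_\ell$ to consist entirely of singularities and hence, by transitivity and isolation of singularities, to reduce to a single attracting singularity, which is impossible in a sectional-Anosov flow (a singularity of $M(X)$ is never an attractor, being of Lorenz-like saddle type). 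Hence $H_\ell\cap\operatorname{Sing}(X)=\emptyset$, so for $x\in W(H_\ell)$ the set $\omega(x)\subset H_\ell$ is nonempty and disjoint from $\operatorname{Sing}(X)$, giving $x\notin W(\operatorname{Sing}(X))$. Therefore $U\cap W(\operatorname{Sing}(X))=\emptyset$, i.e. $W(\operatorname{Sing}(X))\subset M\setminus U$; as $U$ is open and dense, $M\setminus U$ is closed with empty interior, so its subset $W(\operatorname{Sing}(X))$ is nowhere dense.

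The only genuine subtlety, and the step I would treat most carefully, is the assertion that a hyperbolic attractor carries no singularity. I would justify it through the local constancy of the dimensions of a continuous hyperbolic splitting together with the transitivity of attractors, using that the singularities here are Lorenz-like and hence never attracting. Everything else is a routine manipulation of open, dense, and nowhere dense sets drawing on Theorem \ref{thAA}.
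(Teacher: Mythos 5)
Your proof is correct and is essentially the argument the paper intends: the authors give no written proof of Corollary \ref{coro2} beyond the remark that it follows easily from Theorem \ref{thAA}, and your two-directional derivation (openness of attractor basins plus a point-set argument to get density of $O$ in one direction, and the singularity-free nature of connected hyperbolic attractors in the other) is exactly the natural way to fill that in. The only blemish is your justification that the singularities are ``Lorenz-like and hence never attracting'' --- the paper explicitly notes that non-Lorenz-like singularities can occur --- but the conclusion you actually need (no singularity of a sectional-Anosov flow is a sink) still holds, since condition (3) of sectional hyperbolicity forces $DX(\sigma)$ restricted to $E^c_\sigma$ to have an eigenvalue with positive real part.
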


Examples of sectional-Anosov flows
for which the properties of the above corollary fail are the geometric Lorenz attractors.
Further examples are the Anosov flows, the hyperbolic attracting sets (both without singularities) and
the ones in \cite{mmm}.
We also observe that there are sectional-Anosov flows on certain compact $3$-manifolds which are essentially hyperbolic
but not Axiom A: They can be obtained by modifying the singular horseshoe in \cite{lp}.

For the next corollary we shall use the following classical definition.

\begin{defi}
We say that a vector field $X$ of a manifold $M$ is {\em sensitive to the initial conditions} if there is $\delta>0$ such that
for every $x\in M$ and every neighborhood $U$ of $x$ there are $y\in U$ and
$t\geq 0$ such that $d(X_t(x),X_t(y))>\delta$.
The number $\delta$ will be referred to as a {\em sensitivity constant} of $X$
\end{defi}

This is a basic property of chaotic systems widely studied in the literature
\cite{bbcds}, \cite{g},\cite{l}, \cite{pt}, \cite{p}, \cite{r}, \cite{s}.
The following corollary asserts that this property holds for all sectional-Anosov flows on compact $3$-manifolds.
More precisely, we have the following result.

\begin{corollary}
\label{thA}
Every sectional-Anosov flow of a compact $3$-manifold is sensitive to the initial conditions.
\end{corollary}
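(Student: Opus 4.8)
The plan is to extract a single sensitivity constant $\delta>0$ from the geometry of the finite collection supplied by Theorem \ref{thAA} and then verify that it works at every point of $M$. I would first record the structural inputs: every $x\in M$ satisfies $\omega(x)\subset M(X)$, and $M(X)$ is sectional-hyperbolic with splitting $E^s\oplus E^c$, where $\dim E^s=1$ and $\dim E^c=2$ (the flow direction lying in $E^c$). Fix the strong stable foliation $\mathcal{F}^{ss}$ tangent to $E^s$ on a neighborhood of $M(X)$; two points on a common leaf have forward orbits that converge, so the leaves are exactly where separation can fail. Since each $W^{ss}(x)$ is a curve, every neighborhood $U$ of $x$ contains points $y\notin W^{ss}(x)$, and these are the candidates I will separate from $x$.

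Next I would set up the local separation mechanism at each piece of the collection $\{A_1,\dots,A_k\}$. A preliminary observation is that sectional-hyperbolicity forbids sinks and attracting periodic orbits: over such an orbit (or at such a singularity) the flow direction is neutral or contracted, so the area-expansion condition (3) of the sectional-hyperbolic definition cannot hold on a two-plane containing $X$. Hence every hyperbolic attractor $A_i$ is of saddle type and carries genuine unstable expansion, which yields a constant $\delta_i>0$ such that nearby points not matched by the stable holonomy eventually separate by $\delta_i$. At a Lorenz-like singularity $\sigma$, the eigenvalue condition $\lambda_2<\lambda_3<0<-\lambda_3<\lambda_1$ provides a one-dimensional expanding direction transverse to the two-dimensional $W^s(\sigma)$, so two orbits entering a fixed neighborhood of $\sigma$ on opposite sides of $W^s(\sigma)$ are pushed apart by a constant $\delta_\sigma>0$. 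I would then take $\delta$ to be a fixed fraction of the minimum of all these constants together with the minimal distance between distinct pieces (so that orbits accumulating on different pieces separate automatically).

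Finally I would globalize. Given $x\in M$ and a neighborhood $U$, density of the basins (Theorem \ref{thAA}) lets me choose $z\in U$ with $\omega(z)$ contained in some $A_i$, and I may arrange $z\notin W^{ss}(x)$, so that the separation vector from $x$ to $z$ has nonzero central-transverse component. On the orbit segments that stay away from $\operatorname{Sing}(X)$, the flow speed $\|X\|$ is bounded below, and I would feed the two-plane spanned by $X$ and this transverse component into condition (3) to turn exponential area growth into growth of the transverse separation up to $\delta$, taking $y=z$ to conclude. The main obstacle is precisely the passage near the singularities, where $\|X\|\to0$ and the implication ``area expansion $\Rightarrow$ vector expansion'' degenerates; there one must instead route the orbit through the Lorenz-like separation of the previous paragraph and control the transition between the regular and singular regimes. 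This matching is the technical heart of the argument, and it is exactly the point at which the conclusion of Theorem \ref{thAA} that the relevant singularities are Lorenz-like becomes indispensable.
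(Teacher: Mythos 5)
Your skeleton is the right one --- reduce to the finite collection of Theorem \ref{thAA}, extract one constant $\delta$ from local data at each piece plus the mutual distances, then globalize via density of the basins --- and this matches the paper's Lemma \ref{dense}. But the globalization step, which you yourself call ``the technical heart of the argument,'' is left unresolved, and it is a genuine gap rather than a routine verification. Your mechanism is to take a nearby point $z\notin W^{ss}(x)$ and convert the sectional area-expansion of condition (3) into growth of the transverse separation between the two orbits. This conversion is exactly what degenerates near the singularities (where $\|X\|\to0$), and ``routing the orbit through the Lorenz-like local separation and controlling the transition between the regular and singular regimes'' is not a proof: you would need to know that the two orbits actually enter a fixed neighborhood of $\sigma$ on opposite sides of $W^s(\sigma)$, and nothing in your setup guarantees this. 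A second, independent problem is the claim that density lets you choose $z\in U$ with $\omega(z)\subset A_i$ for some attractor $A_i$: Theorem \ref{thAA} only gives density of the union of the basins of the attractors \emph{and} the singularities, and the attractor part may contribute nothing on $U$ (for the geometric Lorenz attractor the collection of hyperbolic attractors is empty and all the density comes from $W^s(Sing(X))$). So the case where $U$ meets only $W^s(Sing(X))$ must be handled on its own, and your expansion mechanism does not cover it.

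The paper avoids all of this with a soft dichotomy (Lemma \ref{dense}). Assume for contradiction that $d(X_t(x),X_t(y))\le\delta$ for all $y\in U$ and $t\ge0$. If some $y\in U$ lies in the basin of an attractor $A_i$, then the orbit of $x$ is dragged into $B_{\beta_i}(A_i)$, where the classical sensitivity of hyperbolic attractors (Palis--Takens) already produces the separation; no sectional-expansion estimate is needed. Otherwise $W^s(\{\sigma_1,\dots,\sigma_l\})\cap U$ is dense in $U$, and one separates $x$ from a suitable $y$ using only that exactly one of the two forward orbits converges to the saddle-type singularity $\sigma_i$ (choosing $y$ off $W^s(\sigma_i)$ when $x\in W^s(\sigma_i)$, which is possible since $W^s(\sigma_i)$ has empty interior, and choosing $y\in W^s(\sigma_i)\cap U$ when $x\notin W^s(Sing(X))$). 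In particular the Lorenz-like eigenvalue structure plays no role in the separation mechanism; it enters only in Theorem \ref{thAA} itself. If you replace your expansion-based globalization by this dichotomy, your argument becomes the paper's.
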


To finish we state a topological consequence of Theorem \ref{thA}.
Recall that a compact $3$-manifold $M$ is {\em atoroidal} if
every two-sided embedded torus $T$ on $M$, for which the homeomorphism of fundamental groups
$\pi_1(T)\to \pi_1(M)$ induced by the inclusion is injective, is isotopic to a boundary component
of $M$.

By Corollary 2.6 in \cite{m1} we have that
a sectional-Anosov flow with singularities, all Lorenz-like, but without
null homotopic periodic orbits in an atoroidal compact 3-manifold has no hyperbolic attractors.
This together with Theorem \ref{thA} implies the following corollary
yielding a relationship between topology and the denseness of the basin of the singularities.

\begin{corollary}
Let $X$ be a sectional-Anosov flow of a compact atoroidal $3$-manifold $M$.
If $X$ has singularities (all Lorenz-like)
but not null homotopic periodic orbits, then the basin of the set of singularities of $X$ is dense in $M$.
\end{corollary}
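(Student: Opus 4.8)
The plan is to combine the main theorem of this paper (Theorem \ref{thAA}) with the structural result Corollary 2.6 in \cite{m1}, since the hypotheses of the present corollary are arranged to be exactly those required by the latter. First I would apply Theorem \ref{thAA} to the flow $X$: as $M$ is a compact $3$-manifold and $X$ is sectional-Anosov, this yields a finite collection of hyperbolic attractors $A_1,\dots,A_k$ together with finitely many Lorenz-like singularities $\sigma_1,\dots,\sigma_l$ whose basins of attraction form a dense subset of $M$. Writing $W(\cdot)$ for the basin of attraction defined above, this says
\[
\operatorname{Cl}\left(\bigcup_{i=1}^{k}W(A_i)\cup\bigcup_{j=1}^{l}W(\sigma_j)\right)=M.
\]

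Next I would invoke the three standing hypotheses: $M$ is atoroidal, every singularity of $X$ is Lorenz-like, and $X$ has no null homotopic periodic orbits. These are precisely the hypotheses of Corollary 2.6 in \cite{m1}, whose conclusion is that such a flow has \emph{no} hyperbolic attractors. Consequently the part of the collection produced by Theorem \ref{thAA} that consists of hyperbolic attractors must be empty, i.e. $k=0$, and the dense set displayed above reduces to the union $\bigcup_{j=1}^{l}W(\sigma_j)$ of the basins of the Lorenz-like singularities.

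Finally I would pass from these individual singularities to the full singular set. Each $\sigma_j$ is in particular a singularity of $X$, so $\sigma_j\in\operatorname{Sing}(X)$, the set of all singularities of $X$. The basin operation is monotone with respect to inclusion, since $B_1\subset B_2$ forces $\{x:\omega(x)\subset B_1\}\subset\{x:\omega(x)\subset B_2\}$; hence each $W(\sigma_j)$ is contained in the basin of $\operatorname{Sing}(X)$. Taking the union over $j$, the basin of $\operatorname{Sing}(X)$ contains the dense set $\bigcup_{j=1}^{l}W(\sigma_j)$ and is therefore itself dense in $M$, which is the desired conclusion.

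I do not expect a genuine obstacle here, as the argument is a direct assembly of two already-established facts rather than a new construction. The only points demanding care are confirming that the corollary's hypotheses match those of Corollary 2.6 in \cite{m1} verbatim, so that the absence of hyperbolic attractors truly applies and forces $k=0$, and noting the elementary but essential monotonicity of the basin operation, which is exactly what lets the finitely many separate singular basins be absorbed into the single basin of $\operatorname{Sing}(X)$.
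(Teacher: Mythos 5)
Your proposal is correct and follows exactly the paper's intended argument: the paper derives this corollary by combining Theorem \ref{thAA} with Corollary 2.6 of \cite{m1} (which rules out hyperbolic attractors under these hypotheses), leaving only the singular basins to carry the density. Your added remarks on monotonicity of the basin operation are a correct and harmless elaboration of what the paper leaves implicit.
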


An example where the hypotheses of the above corollary are fulfilled
is the geometric Lorenz attractor.

The proof of Theorem \ref{thAA} relies on the techniques in \cite{ams}, \cite{m1} but with some important differences.
For instance, the proof in \cite{ams} is based on the {\em Property (P)}
that the unstable manifold of every periodic point of $X$ intersects
the stable manifold of a singularity of $X$.
This property not only holds for every vector field close to a nonwandering sectional-Anosov flow
of a compact $3$-manifold, but also implies that the basin of the singularities of $X$ is dense in $M$.

In our case we do not have this property since the vector fields
under consideration are not close to a nonwandering sectional-Anosov flow in general.
To bypass this problem we will prove that every sectional-Anosov flow
comes equipped with a positive constant $\delta$
such that every point whose omega-limit set passes $\delta$-close to some singularity
is accumulated by the stable manifolds of the singularities.
To prove this assertion we combine some arguments from \cite{ams}, \cite{bm1} and \cite{m2}.
This assertion is the key ingredient for the proof of Theorem \ref{thAA}. Corollary \ref{thA} will be obtained easily from this theorem and Lemma \ref{dense}.
Both results will be proved in the last section.

\section{Preliminars}

\noindent
In this section we prove some lemmas which will be used to prove our results.
We start with some basic definitions.
Let $X$ be a $C^1$ vector field on $M$ inwardly transverse to $\partial M$ (if $\partial M\neq\emptyset$).
For every $x\in M(X)$ we define the sets
$$
W^{ss}(x)=\{y\in M:d(X_t(x),X_t(y))\to 0\mbox{ as }t\to\infty\},
$$
$$
W^{uu}(x)=\{y\in M:d(X_t(x),X_t(y))\to 0\mbox{ as }t\to-\infty\}
$$
$$
W^{s}(x)=\displaystyle\bigcup_{t\in \mathbb{R}}W^{ss}(X_t(x))
\quad\mbox{ and }\quad
W^{u}(x)=\displaystyle\bigcup_{t\in \mathbb{R}}W^{uu}(X_t(x))
$$
We denote by $Sing(X)$ the set of singularities of $X$
and denote by
$$
W^s(Sing(X))=\displaystyle\bigcup_{\sigma\in Sing(X)}W^s(\sigma)
$$
the basin of $Sing(X)$.
We say that a point $p$ is {\em periodic} for $X$ if
there is a minimal $t>0$ such that $X_t(p)=p$. Denote by $Per(X)$ the set of periodic points of $X$.
We shall use the following  auxiliary definition.

\begin{defi}
\label{good}
An {\em intersection number} for a vector field $X$ is a positive number $\delta$
such that if $p\in Per(X)$ and $W^u(p)\cap B_\delta(Sing(X))\neq\emptyset$,
then $W^u(p)\cap W^s(Sing(X))\neq\emptyset$.
\end{defi}

Applying the connecting lemma \cite{bm2} as in Lemma 1 of \cite{m2}
we obtain the following key fact.

\begin{lemma}
\label{thedelta}
Every sectional-Anosov flow of a compact $3$-manifold has an intersection number.
\end{lemma}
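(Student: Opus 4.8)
The plan is to argue by contradiction. If $X$ has no intersection number, then taking $\delta=1/n$ there are a periodic point $p_n\in Per(X)$ and a point $q_n\in W^u(p_n)\cap B_{1/n}(Sing(X))$ for which $W^u(p_n)\cap W^s(Sing(X))=\emptyset$. Since $p_n\in M(X)$ and unstable manifolds of points of the maximal invariant set stay in it, $q_n\in W^u(p_n)\subset M(X)$. The singularities of a sectional-Anosov flow are hyperbolic, hence isolated, and $Sing(X)\subset M(X)$ is compact, so $Sing(X)$ is finite; passing to a subsequence I may therefore assume $q_n\to\sigma$ for a fixed $\sigma\in Sing(X)$. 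This exhibits $\sigma$ as a singularity accumulated by regular points of $M(X)$, and in a compact $3$-manifold such a singularity is necessarily Lorenz-like, so I may use the eigenvalue ordering $\lambda_2<\lambda_3<0<-\lambda_3<\lambda_1$ together with its invariant manifolds.

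Next I would fix the local picture around $\sigma$. The strong stable manifold $W^{ss}(\sigma)$ is one-dimensional and lies inside the two-dimensional stable manifold $W^s(\sigma)$, which separates a small neighborhood of $\sigma$. On a cross section $\Sigma$ transverse to the flow the trace $\ell=W^s(\sigma)\cap\Sigma$ is a curve splitting $\Sigma$ into two pieces, while $W^{ss}(\sigma)$ leaves a single marked point on $\ell$. The decisive structural input, which I would extract from the sectional-hyperbolicity of $M(X)$ as in \cite{bm1}, \cite{m1}, is that $W^{ss}(\sigma)\cap M(X)=\{\sigma\}$: no nontrivial orbit of the maximal invariant set approaches $\sigma$ along the strong stable direction. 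Because $q_n\in M(X)$, $q_n\to\sigma$, and $q_n\notin W^s(\sigma)$ (otherwise $q_n\in W^s(Sing(X))$, against the choice of $p_n$), the points $q_n$ must accumulate $\sigma$ from the central side while avoiding the forbidden strong stable trace, so the trace of $W^u(p_n)$ on $\Sigma$ is a curve pinned near $\ell$ yet never meeting it.

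Finally I would invoke the connecting lemma \cite{bm2}, exactly along the lines of Lemma 1 of \cite{m2}. Since $q_n\in W^u(p_n)$ is arbitrarily close to $\sigma$, hence to $W^s(\sigma)$, the connecting lemma yields a $C^1$ perturbation $Y$ of $X$, arbitrarily $C^1$-close to $X$ and unaltered along the orbit of $p_n$ and near $\sigma$, with $W^u_Y(p_n)\cap W^s_Y(\sigma)\neq\emptyset$. As sectional-Anosov flows form a $C^1$-open class, $Y$ is again sectional-Anosov and this connection lives in $M(Y)$; combining this with the central expansion (condition (3) of sectional-hyperbolicity) and the separating role of $\ell$, the trace of $W^u(p_n)$ on $\Sigma$ is forced to cross $\ell$ already for $X$, giving $W^u(p_n)\cap W^s(\sigma)\neq\emptyset$ and contradicting the standing assumption. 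The step I expect to be the main obstacle is precisely this last transfer from the perturbed flow back to $X$: controlling the geometry of $W^u(p_n)$ in the region where the flow slows down near $\sigma$ and the return maps degenerate, and ensuring that the intersection produced by the connecting lemma survives as the perturbation size tends to zero. This is where the quantitative interplay between the central expansion and the exclusion $W^{ss}(\sigma)\cap M(X)=\{\sigma\}$ must be used, and where the arguments of \cite{ams}, \cite{bm1} and \cite{m2} have to be combined with care.
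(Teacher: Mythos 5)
Your proposal does not close, and the step you yourself flag as ``the main obstacle'' is in fact the entire content of the lemma. The paper's proof is a one-line citation: apply the connecting lemma of \cite{bm2} as in Lemma 1 of \cite{m2}. The crucial point is that the sectional-Anosov connecting lemma of \cite{bm2} is \emph{not} a Hayashi-type perturbation statement: it produces an actual intersection $W^u(p)\cap W^s(\sigma)\neq\emptyset$ \emph{for the given flow} $X$, by exploiting the local geometry near a Lorenz-like singularity (singular cross sections, the exclusion $W^{ss}(\sigma)\cap M(X)=\{\sigma\}$, and the central expansion). You instead propose to perturb $X$ to some $Y$ with $W^u_Y(p_n)\cap W^s_Y(\sigma)\neq\emptyset$ and then ``transfer back'' to $X$. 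Intersections created by arbitrarily small $C^1$ perturbations do not in general persist to the original vector field; this is precisely why, as the introduction of this paper explains, the Property (P) argument of \cite{ams} only applies to fields $C^1$-close to nonwandering sectional-Anosov flows and has to be replaced here. So the decisive step of your argument is not merely delicate but is left entirely unproved, and the route chosen to attack it (perturb, then transfer back) is the one that is known to fail.

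There is a second, more local error. You assert that a singularity $\sigma$ accumulated by regular points of $M(X)$ is necessarily Lorenz-like. This is false: a non-Lorenz-like singularity of a sectional-Anosov flow on a compact $3$-manifold has both central eigenvalues with positive real part (since $\det(DX_t/E^c_\sigma)$ expands and $\sigma$ is hyperbolic), hence a two-dimensional unstable manifold; this unstable manifold is contained in $M(X)$ and accumulates $\sigma$ by regular points. The statement you actually need is of the form ``a singularity accumulated by unstable manifolds of periodic orbits in the relevant recurrent way is Lorenz-like,'' and verifying that the $\sigma$ produced by your compactness argument satisfies such a hypothesis (uniformly in $n$) is part of the work, not a free consequence of $q_n\in M(X)$. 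Your opening reduction (negate the uniform $\delta$, extract $p_n$, $q_n\to\sigma$) is a legitimate way to set up the problem, but everything after that rests on the two unsupported claims above.
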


Let $X$ be a vector field on a compact manifold $M$ inwardly transverse to $\partial M$ (if $\partial M\neq\emptyset$).
Given $\delta>0$ we define
\begin{equation}
\label{the set}
H_\delta=\displaystyle\bigcap_{t\in\mathbb{R}}X_t(M\setminus B_\delta(Sing(X))).
\end{equation}
If $X$ is sectional-Anosov,
then $H_\delta$ is a saddle-type hyperbolic set \cite{bm}, \cite{mpp}.

As a first application of Lemma \ref{thedelta} we obtain the following corollary (which is also true
in higher dimensions).

\begin{corollary}
\label{**}
The number of attractors of a sectional-Anosov flow on a compact $3$-manifold is finite.
\end{corollary}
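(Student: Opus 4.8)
The plan is to split the attractors of $X$ into two classes, those that contain a singularity and those that are singularity-free, and to bound each class separately. I would first record a general fact used for both: since every attractor is transitive and has an attracting neighborhood, two distinct attractors are disjoint. Indeed, if $A_1,A_2$ are attractors with $A_1\cap A_2\neq\emptyset$, one picks $p\in A_1$ realizing $\omega(p)=A_1$ (so the orbit of $p$ is dense in $A_1$); that orbit must enter the interior of the attracting neighborhood of $A_2$, which forces $A_1=\omega(p)\subset A_2$, and symmetrically $A_2\subset A_1$, so $A_1=A_2$. In particular each attractor meeting $Sing(X)$ contains at least one of the singularities, and since the singularities of a sectional-Anosov flow are hyperbolic and hence $Sing(X)$ is finite, there are only finitely many attractors that meet $Sing(X)$.

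It remains to bound the singularity-free attractors, and this is where Lemma \ref{thedelta} enters. I would first note that any singularity-free attractor $A$ is a \emph{hyperbolic} attractor: being compact, invariant and disjoint from the finite set $Sing(X)$, it lies at distance at least $2\epsilon$ from $Sing(X)$ for some $\epsilon>0$, so $A\subset H_{2\epsilon}$, which is a saddle-type hyperbolic set; consequently periodic orbits are dense in $A$ and $W^u(p)\subset A$ for every $p\in Per(X)\cap A$. The essential step is then to upgrade this to a \emph{uniform} separation: I claim $A\cap B_\delta(Sing(X))=\emptyset$, where $\delta$ is the intersection number from Lemma \ref{thedelta}. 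If not, density of periodic orbits would supply $p\in Per(X)\cap A$ with $W^u(p)\cap B_\delta(Sing(X))\neq\emptyset$, and the defining property of the intersection number would then produce $r\in W^u(p)\cap W^s(Sing(X))$. But $W^u(p)\subset A$ gives $r\in A$, while $r\in W^s(\sigma)$ gives $\sigma\in\omega(r)\subset A$, contradicting that $A$ is singularity-free.

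With the uniform estimate in hand, every singularity-free attractor is invariant and at distance at least $\delta$ from $Sing(X)$, hence $A\subset H_\delta$ for the \emph{single} constant $\delta$. Since $H_\delta$ is a locally maximal hyperbolic set, the spectral decomposition writes its nonwandering set as a finite disjoint union of basic sets $\Omega_1,\dots,\Omega_k$; each singularity-free attractor is transitive and therefore contained in a single $\Omega_i$, and the disjointness argument above shows at most one attractor sits inside each $\Omega_i$. Thus there are at most $k$ singularity-free attractors, and together with the finitely many attractors meeting $Sing(X)$ this proves finiteness. The main obstacle is exactly the uniform separation claim: without the intersection number one only obtains $A\subset H_{\epsilon_A}$ with $\epsilon_A$ depending on $A$, and since the number of basic sets of $H_\epsilon$ may grow as $\epsilon\to 0$ this would not force finiteness; Lemma \ref{thedelta} is precisely what confines all singularity-free attractors to one hyperbolic block $H_\delta$.
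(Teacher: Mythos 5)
Your proposal is correct and follows essentially the same route as the paper: the paper also reduces to singularity-free attractors (using that attractors are pairwise disjoint and $Sing(X)$ is finite), uses the intersection number together with $W^u(p)\subset A$ to show each such attractor avoids $B_\delta(Sing(X))$ for the single constant $\delta$, concludes that all of them lie in the hyperbolic set $H_\delta$, and invokes finiteness of attractors in a hyperbolic set. Your write-up merely makes explicit a few steps the paper leaves implicit (disjointness of attractors, density of periodic orbits, the spectral-decomposition count).
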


\begin{proof}
Suppose by contradiction that there is a sectional-Anosov flow
of a compact $3$-manifold exhibiting an infinite sequence of attractors $A_k$, $k\in \mathbb{N}$.
Since the family of attractors of $X$ is pairwise disjoint, and
$Sing(X)$ is finite, we can assume that none of such attractors have a singularity.
By Lemma \ref{thedelta} we can fix an intersection number $\delta$ of $X$.
If one of the attractors $A_k$ intersects $B_\delta(Sing(X))$, then
we can select a periodic point $p_k\in A_k\cap B_\delta(Sing(X))$.
Since $\delta$ is an intersection number we would have that
$W^u(p_k)\cap W^s(Sing(X))\neq\emptyset$. But $W^u(p_k)\subset A_k$ (for $A_k$
is an attractor) so $A_k$ contains a singularity, a contradiction.
Therefore, $B_\delta(Sing(X))\cap \left(\bigcap_{k}A_k\right)=\emptyset$ and so
$\bigcup_{k}A_k\subset H_\delta$ where $H_\delta$ is given in (\ref{the set}).
Since $X$ is sectional-Anosov we have that $H_\delta$ is a hyperbolic set and, since the numbers of attractors on a hyperbolic set is finite,
we obtain that the sequence $A_k$ is finite, a contradiction.
This ends the proof.
\end{proof}

Next we recall the terminology of singular partitions \cite{bm1}.

Consider a vector field $X$ on a compact manifold $M$.
By a {\em cross section} of $X$ we mean a codimension one submanifold $\Sigma$
which is transverse to $X$. The interior and the boundary of $\Sigma$ (as a submanifold)
will be denoted by $Int(\Sigma)$ and $\partial\Sigma$ respectively.
Given a family of cross sections $\mathcal{R}$ we still denote by
$\mathcal{R}$ the union of its elements. We also denote
$$
\partial \mathcal{R}=\displaystyle\bigcup_{\Sigma\in \mathcal{R}}\partial\Sigma
\quad\mbox{ and }
\quad
Int(\mathcal{R})=\displaystyle\bigcup_{\Sigma\in \mathcal{R}}Int(\Sigma).
$$

\begin{defi}
A {\em singular partition} of a compact invariant set $\Lambda$ of $X$ is a finite disjoint collection
of cross sections $\mathcal{R}$ satisfying
$$
\Lambda\cap\partial\mathcal{R}=\emptyset\quad\mbox{ and }\quad
Sing(X)\cap \Lambda=\{x\in \Lambda:X_t(x)\not\in\mathcal{R},\forall t\in \mathbb{R}\}.
$$
\end{defi}

A cross section $\Sigma$ of $X$ is a {\em rectangle} if it is diffeomorphic to
$[0,1]\times [0,1]$. In this case the boundary $\partial \Sigma$ is formed by
two vertical curves, with union $\partial^v\Sigma$, and two horizontal curves.
If $z\in Int(\Sigma)$ we say that the rectangle $\Sigma$ is around $z$.
On the other hand, it is well known from the invariant manifold theory \cite{hps} that the
subbundle $E^s$ of a sectional-Anosov flow $X$ can be integrated yielding a strong
stable foliation $W^{ss}$ on $M$. As usual we denote by $W^{ss}(x)$ the leaf of this foliation passing
through $x\in M$. In the case when $x\in \Sigma$ we denote by
$\mathcal{F}^s(x,\Sigma)$ (or simply $\mathcal{F}^s(x)$) the projection of $W^{ss}(x)$
onto $\Sigma$ along the orbits of $X$.

For any set $A$ we denote by $Cl(A)$ its closure and by $B_\delta(A)$ (for $\delta>0$) we denote
the $\delta$-ball centered at $A$.

The following lemma uses intersection numbers to find singular partitions for certain omega-limit sets. Its proof follows closely that of Theorem 3 in \cite{ams}.

\begin{lemma}
\label{size}
Let $\delta$ be an intersection number of a sectional-Anosov flow on a compact $3$-manifold $M$. If
$x\notin Cl(W^s(Sing(X)))$ satisfies $\omega(x)\cap B_\delta(Sing(X))\neq\emptyset$,
then $\omega(x)$ has a singular partition.
\end{lemma}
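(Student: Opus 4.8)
Set $\Lambda=\omega(x)$, a compact invariant set, and recall that $Sing(X)\cap\Lambda$ is finite. Each singularity $\sigma\in\Lambda$ is accumulated by the regular orbit of $x$, so by sectional-hyperbolicity on a $3$-manifold it is Lorenz-like: it carries a two-dimensional local stable manifold $W^s_{\mathrm{loc}}(\sigma)$ containing a one-dimensional strong-stable manifold that splits it into two half-disks, and a one-dimensional unstable manifold. The first thing I would extract from the hypothesis $x\notin Cl(W^s(Sing(X)))$ is that, since $Cl(W^s(Sing(X)))$ is closed and invariant, the whole orbit of $x$ is disjoint from it; hence every crossing of the forward orbit of $x$ with any cross section avoids the traces of the stable manifolds $W^s(\sigma)$, and $\Lambda$ can meet such a trace only in the limit. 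If $Sing(X)\cap\Lambda=\emptyset$ there is nothing singular to do: then $\Lambda\subset H_{\delta_0}$ for some small $\delta_0$ is a saddle-type hyperbolic set and any finite family of cross sections meeting every orbit of the (hyperbolic) flow on $\Lambda$ furnishes the partition with empty ``never hits $\mathcal R$'' set. So I would assume from now on that $\Lambda$ contains a singularity.

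Next I would build the cross sections in two regimes. Fix small flow-box neighborhoods $U_\sigma$ of the singularities $\sigma\in\Lambda$. On the compact regular piece $\Lambda\setminus\bigcup_\sigma U_\sigma$ I would cover $\Lambda$ by finitely many small rectangles transverse to $X$, taking their horizontal boundaries along leaves of the strong stable foliation $\mathcal F^s$ and their vertical boundaries transverse to it; because $\Lambda\cap\Sigma$ is transversally nowhere dense (the regular part of $\Lambda$ lies in a hyperbolic $H_\delta$) and the forward crossings of the orbit of $x$ avoid the stable traces, both families of boundaries can be slid into gaps of $\Lambda$, so that $\Lambda\cap\partial=\emptyset$. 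Near each Lorenz-like $\sigma$ I would instead use the local model to place finitely many rectangles transverse to $X$ on the two sides of the strong-stable manifold, positioned so that the trace $\ell_\sigma=W^s_{\mathrm{loc}}(\sigma)\cap\mathcal R$ lies in their interior, every orbit of $\Lambda$ that enters $U_\sigma$ and later leaves crosses one of them, and the only orbit of $\Lambda$ remaining in $U_\sigma$ for all time is $\sigma$ itself.

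Writing $\mathcal R$ for the resulting finite disjoint collection, I would verify the two defining properties of a singular partition: that $\Lambda\cap\partial\mathcal R=\emptyset$, and that $Sing(X)\cap\Lambda$ coincides with the set of points of $\Lambda$ whose full orbit misses $\mathcal R$ — for the latter, the inclusion ``$\supseteq$'' is the assertion that every regular orbit of $\Lambda$ crosses $\mathcal R$ (immediate off the $U_\sigma$ and guaranteed near $\sigma$ by the placement above), while ``$\subseteq$'' reduces to $\sigma\notin\mathcal R$. The main obstacle is getting the boundary condition right at the singularities while keeping the collection finite: $\Lambda$ accumulates on the stable traces $\ell_\sigma$, so one must ensure that the transverse boundaries of the singular rectangles reach $W^s(\sigma)$ cleanly and can be pushed off $\Lambda$, and that no additional sections are forced by orbits shuttling near $\sigma$. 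This is exactly the point where $\delta$ being an intersection number (Lemma \ref{thedelta}) is used, in the spirit of Lemma 1 of \cite{m2} and the arguments of \cite{ams}, \cite{bm1}: the heteroclinic connections it provides, between unstable manifolds of the periodic orbits approximating the recurrence of $\Lambda$ in $H_\delta$ and the stable manifolds of the singularities, pin down the position of $W^s(\sigma)$ relative to $\Lambda$ and let the finitely many singular rectangles be closed up with boundaries disjoint from $\Lambda$.
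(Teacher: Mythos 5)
There is a genuine gap. Your outline correctly reduces the problem to producing, around each regular point of $\Lambda=\omega(x)$, a rectangle whose boundary misses $\Lambda$ (this is the paper's reduction too, via Proposition 2 of \cite{ams}), and it correctly identifies the danger zone as the stable-leaf sides of the rectangles near the singularities. But at exactly that point the argument is asserted rather than given. Two specific problems. First, to push the \emph{vertical} boundaries off $\Lambda$ you claim that $\Lambda\cap\Sigma$ is transversally nowhere dense because ``the regular part of $\Lambda$ lies in a hyperbolic $H_\delta$.'' That justification fails when $\Lambda$ contains a singularity: a point of $\Lambda$ lying outside the flow boxes $U_\sigma$ may still have an orbit accumulating on $Sing(X)$, so it belongs to no $H_{\delta'}$. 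What is actually needed is that $\omega(x)\cap W^{ss}(z)$ has empty interior in $W^{ss}(z)$, which in the singular case is a nontrivial theorem (the Main Theorem of \cite{m3}, on strong stable manifolds of sectional-hyperbolic sets), and in the nonsingular case still requires an argument: the paper shows that a nonempty interior would force $x\in\Omega(X)$, then uses the closing lemma of \cite{m2}, the intersection number, and the Inclination Lemma to contradict $x\notin Cl(W^s(Sing(X)))$.

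Second, and more seriously, the \emph{horizontal} boundaries: you say the intersection number ``pins down the position of $W^s(\sigma)$ relative to $\Lambda$'' and lets the rectangles ``be closed up with boundaries disjoint from $\Lambda$,'' but no mechanism is supplied, and this is the heart of the lemma. The actual argument runs as follows: choose the horizontal sides to be leaves $\mathcal{F}^s(x')$, $\mathcal{F}^s(z')$ of the projected strong stable foliation; if $\omega(x)$ met such a leaf at a point $h$, then $h\in\Omega(X)$, so by the closing lemma $h$ is approximated either by points whose omega-limit is a singularity (excluded, since by uniform size of stable manifolds this would put $x$ in $Cl(W^s(Sing(X)))$) or by periodic points $p_n$; since $\omega(h)=\omega(x)$ meets $B_\delta(Sing(X))$, the orbits of the $p_n$ eventually enter $B_\delta(Sing(X))$, so the intersection number gives $W^u(p_n)\cap W^s(Sing(X))\neq\emptyset$, and the Inclination Lemma then forces $h$, hence $x$, into $Cl(W^s(Sing(X)))$ --- a contradiction. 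This chain (closing lemma at the hypothetical boundary intersection, dichotomy, intersection number, Inclination Lemma, uniform stable-manifold size) is the entire content of the proof and is absent from your proposal; without it the claim that the boundaries can be made disjoint from $\Lambda$ is unsupported. Note also that the hypothesis $x\notin Cl(W^s(Sing(X)))$ is used repeatedly and quantitatively in this chain, not merely to say the orbit of $x$ avoids the stable traces.
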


\begin{proof}
By Proposition 2 in \cite{ams} it suffices to prove that for every $z\in \omega(x)\setminus Sing(X)$
there is a cross section $\Sigma_z$ through $z$ such that
$\omega(x)\cap \partial \Sigma_z=\emptyset$.
So fix $z\in \omega(x)\setminus Sing(X)$.

We claim that $\omega(x)\cap W^{ss}(z)$ has empty interior in $W^{ss}(z)$.
If $\omega(x)$ has a singularity this follows from the Main Theorem of \cite{m3}
(indeed, the proof in \cite{m3} was done for transitive sets but works
for omega-limit sets also).
Then, we can assume that $\omega(x)$ has no singularities, and so, it is a hyperbolic
set \cite{bm}, \cite{mpp}.
If $\omega(x)\cap W^{ss}(z)$ has nonempty interior in $W^{ss}(z)$ then
$\omega(x)$ contains a local strong stable manifold $W^{ss}_\epsilon(y)$ for some
$y\in \omega(x)$. From this and the hyperbolicity of $\omega(x)$
we obtain $x\in \omega(x)$ and so $x\in \Omega(X)$.
As $x\notin Cl(W^s(Sing(X)))$ the closing lemma in \cite{m2} implies that
there is a sequence $p_n\in Per(X)$ converging to $x$.
As $\omega(x)\cap B_\delta(Sing(X))\neq\emptyset$ the above convergence implies that
the orbit of $p_n$ intersects $B_\delta(Sing(X))$ for $n$ large.
As $\delta$ is an intersection number we obtain
$W^u(p_n)\cap W^s(Sing(X))\neq\emptyset$ for all $n$ large.
From this and the Inclination Lemma \cite{kh} we obtain that $x\in Cl(W^s(Sing(X)))$ which is absurd.
The claim follows.

Using the claim we obtain a rectangle $R_z$ around $z$ such that
$\omega(x)\cap \partial^v R_z=\emptyset$.
If the positive orbit of $x$ intersects only one component of $R_z\setminus \mathcal{F}^s(z)$
we select a point $x'$ in that component and a point $z'$ in the other component.
In such a case we define $\Sigma_z$ as the subrectangle of
$R_z$ bounded by $\mathcal{F}^ s(x')$ and $\mathcal{F}^s(z')$.
We certainly have that $\omega(x)\cap \mathcal{F}^s(z')=\emptyset$.
On the other hand, if $\omega(x)$ intersects $\mathcal{F}^s(x')$ in a point $h$ (say)
then $h\in \Omega(X)$ and so, by the closing lemma \cite{m2}, $h$ is approximated by periodic
points or by points whose omega-limit set is a singularity.
The latter option must be excluded (for it would imply $x\in Cl(W^s(Sing(X)))$)
so $h$ is the limit of a sequence $p_n\in Per(X)$.
But $h\in \mathcal{F}^ s(x')$ so $\omega(h)=\omega(x')$.
As $x'$ and $x$ belongs to the same orbit we also have $\omega(x')=\omega(x)$
yielding $\omega(h)=\omega(x)$. As $\omega(x)\cap B_\delta(Sing(X))\neq\emptyset$
we obtain $\omega(h)\cap B_\delta(Sing(X))\neq\emptyset$ too.
Since $p_n$ approaches $h$ we conclude that the orbit of $p_n$ intersects $B_\delta(Sing(X))$
for all $n$ large.
As $\delta$ is an intersection number we obtain that $W^u(p_n)\cap W^s(Sing(X))\neq\emptyset$
and so $h\in Cl(W^s(Sing(X)))$ by the Inclination Lemma as before.
From this and the uniform size of the stable manifolds we obtain $x\in Cl(W^s(Sing(x)))$
which is a contradiction.
Therefore, $\omega(x)\cap \mathcal{F}^s(x')=\emptyset$.
As $\partial \Sigma_z$ consists of $\partial^v\Sigma_z$ together
with $\mathcal{F}^s(x' )\cup\mathcal{F}^s(z')$ we obtain
$\omega(x)\cap \partial \Sigma_z=\emptyset$.
The construction of $\Sigma_z$ is similar in the case when $\omega(x)$ intersects both components
of $R_z\setminus \mathcal{F}^s(z)$. This finishes the proof.
\end{proof}

A second auxiliary definition is as follows.

\begin{defi}
\label{cdelta}
Let $X$ be a vector field of a compact manifold $M$. Given $\delta\geq0$ we say that $X$ satisfies (C)$_\delta$ if
$$
\{x\in M:\omega(x)\cap B_\delta(Sing(X))\neq\emptyset\}\subset Cl(W^s(Sing(X))).
$$
\end{defi}

We shall use the previous lemma to prove the following result. Its proof
follows closely that of Theorem 4 in \cite{ams}.

\begin{lemma}
\label{ass2}
If $\delta$ is an intersection number of a sectional-Anosov flow $X$ of a compact $3$-manifold $M$,
then $X$ satisfies (C$)_\delta$.
\end{lemma}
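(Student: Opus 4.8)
The plan is to argue by contradiction, exploiting Lemma \ref{size}. Suppose $X$ does not satisfy (C$)_\delta$. Then there is $x\in M$ with $\omega(x)\cap B_\delta(Sing(X))\neq\emptyset$ but $x\notin Cl(W^s(Sing(X)))$. These are precisely the hypotheses of Lemma \ref{size}, so that lemma provides a singular partition $\mathcal{R}$ of $\omega(x)$. The whole point is to show that the existence of $\mathcal{R}$, combined with the fact that $\omega(x)$ meets $B_\delta(Sing(X))$, is incompatible with $x\notin Cl(W^s(Sing(X)))$.

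First I would use the singular partition to organize the returns of the orbit of $x$. Since $\omega(x)\cap\partial\mathcal{R}=\emptyset$ and the non-singular points of $\omega(x)$ are exactly those whose orbit meets $\mathcal{R}$, the positive orbit of $x$ crosses $Int(\mathcal{R})$ infinitely often at points accumulating on $\omega(x)\cap\mathcal{R}$. Fixing a crossing $x'$ of the orbit of $x$ in a cross section $\Sigma\in\mathcal{R}$, I would look for a point $h\in\omega(x)$ lying on the leaf $\mathcal{F}^s(x')$ of the strong stable foliation through $x'$. The reason this is the right target is that any such $h$ satisfies $\omega(h)=\omega(x')=\omega(x)$, so in particular $\omega(h)\cap B_\delta(Sing(X))\neq\emptyset$; the contracting effect along $\mathcal{F}^s$ of the return map associated to $\mathcal{R}$, which comes from the sectional-hyperbolicity of $H_\delta$, is what makes the existence of such an $h$ plausible.

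Given $h$, the argument then mirrors the core computation already carried out inside the proof of Lemma \ref{size}. Since $h\in\omega(x)\subset\Omega(X)$, the closing lemma of \cite{m2} shows that $h$ is approximated either by periodic points or by points whose omega-limit set is a singularity; the latter alternative would place $h$, and then $x$, in $Cl(W^s(Sing(X)))$ through the uniform size of the strong stable manifolds, so it can be discarded. Hence $h=\lim_n p_n$ with $p_n\in Per(X)$. Because $\omega(h)\cap B_\delta(Sing(X))\neq\emptyset$, some finite iterate $X_T(h)$ already lies in the open set $B_\delta(Sing(X))$, so $X_T(p_n)\in B_\delta(Sing(X))$ for all large $n$; thus the orbit of $p_n$ meets $B_\delta(Sing(X))$. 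As $\delta$ is an intersection number this gives $W^u(p_n)\cap W^s(Sing(X))\neq\emptyset$, and the Inclination Lemma \cite{kh} then yields $h\in Cl(W^s(Sing(X)))$.

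The final and most delicate step is to transfer this back to $x$. Here I would invoke the uniform size of the strong stable manifolds together with the fact that $W^s(Sing(X))$ is saturated by strong stable leaves: since $h$ lies on $\mathcal{F}^s(x')$, the local strong stable manifolds through $h$ and through $x'$ agree up to uniform size, so $h\in Cl(W^s(Sing(X)))$ forces $x'\in Cl(W^s(Sing(X)))$, whence $x\in Cl(W^s(Sing(X)))$ by flow-invariance of $W^s(Sing(X))$. This contradicts the choice of $x$ and proves (C$)_\delta$. The main obstacle is the second step: producing a point $h\in\omega(x)$ that simultaneously lies on the strong stable leaf of the orbit of $x$ and inherits $\omega(h)\cap B_\delta(Sing(X))\neq\emptyset$, for it is exactly this point that lets the intersection-number and Inclination-Lemma machinery travel from the limit set $\omega(x)$ all the way to the orbit of $x$ itself.
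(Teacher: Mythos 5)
There is a genuine gap, and it sits exactly where you flag it: the existence of a point $h\in\omega(x)\cap\mathcal{F}^s(x',\Sigma)$ on the strong stable leaf of a return $x'$ of the orbit of $x$. You call this ``plausible'' on the grounds that the return map contracts the leaves $\mathcal{F}^s$, but contraction of the leaves only tells you that the returns $x_n$ accumulate on $\omega(x)\cap\Sigma$; it does not place a point of $\omega(x)$ on the leaf through a \emph{fixed} return $x'$. What you are really asserting is that $x$ belongs to the union of the strong stable manifolds of the points of its own omega-limit set. That is true when $\omega(x)$ is a hyperbolic attractor (the basin is then foliated by the stable manifolds of its points), but for a general sectional-Anosov flow it is precisely the kind of statement that needs proof and is not available here; nothing in Lemma \ref{size}, nor the claim inside its proof (which only says that $\omega(x)\cap W^{ss}(z)$ has empty interior for $z\in\omega(x)$), produces such an $h$. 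Once $h$ is granted, the rest of your argument (closing lemma at $h$, the orbit of $p_n$ entering $B_\delta(Sing(X))$, the intersection number, the Inclination Lemma, and the transfer back to $x$ along the leaf) is sound and indeed mirrors the computation already carried out inside the proof of Lemma \ref{size}; but the whole difficulty of the present lemma is concentrated in producing $h$, and your proposal leaves it unresolved.

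The paper avoids this point entirely. After obtaining the singular partition from Lemma \ref{size}, it takes an interval $I$ around $x$ tangent to $E^c_x$ and disjoint from $W^s(Sing(X))$, and invokes Theorem 11 of \cite{bm1} to get returns $x_n\in S\in\mathcal{R}$ carrying intervals $I_n$ (in the positive orbit of $I$) whose two components at $x_n$ have length bounded away from zero; this central expansion is the substitute for your missing $h$. At the limit point $w=\lim x_n\in\omega(x)\cap S$ the closing lemma yields periodic points $p_n\to w$ (the alternative of accumulation by points whose omega-limit is a singularity being excluded because it would force $I\cap W^s(Sing(X))\neq\emptyset$), and then a dichotomy is run: if $W^u(p_n)$ meets $B_\delta(Sing(X))$ for infinitely many $n$, the intersection number and the Inclination Lemma put $x$ in $Cl(W^s(Sing(X)))$, a contradiction; otherwise $p_n$ lies in the hyperbolic set $H_\delta$, so $W^u(p_n)$ has uniformly large size, the large intervals $I_n$ near $w$ force $\omega(x)\subset Cl(W^u(p_n))$, and this contradicts $\omega(x)\cap B_\delta(Sing(X))\neq\emptyset$. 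Your proposal contains neither the use of Theorem 11 of \cite{bm1} nor the second horn of this dichotomy, and without them the argument does not close.
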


\begin{proof}
Suppose by contradiction that (C$)_\delta$ fails.
Then, there is $x\in M$ such that $\omega(x)\cap B_\delta(Sing(X))\neq\emptyset$
but $x\notin Cl(W^s(Sing(X)))$.

By  Lemma \ref{size} we have that
$\omega(x)$ has a singular partition $\mathcal{R}$.
Using that $x\notin Cl(W^s(Sing(X)))$ we can choose an interval $I$ around $x$,
tangent to $E^c_x$, which does not intersect $W^s(Sing(X))$.
On the other hand,
we have clearly that $\omega(x)$ is not a singularity.
Then, by Theorem 11 in \cite{bm1},
there are $S\in R$,  sequence $x_n\in S$ (in the positive orbit of $x$),
a sequence $I_n$ of intervals around $x_n$ (in the positive orbit of $I$)
such that both components of $I_n\setminus \{x_n\}$ have length bounded away from zero.
We can assume that $x_n\to w$ for some $w\in S$ and further
$I_n$ converges to an interval $J$ around $w$ tangent to $E^c_w$.
But $w\in \omega(x)$ so $w\in \Omega(X)$ and, then, by the closing lemma \cite{m2},
we have that $w$ is accumulated by periodic points or by points whose omega-limit set is a singularity.
In the latter case we have from the uniform size of the stable manifolds that
$J\cap W^s(Sing(X))\neq\emptyset$ and so
$J_n\cap W^s(Sing(X))\neq\emptyset$ for all $n$ large.
As $J_n$ belongs to the orbit of $I$ we obtain
$I\cap W^s(Sing(X))\neq	\emptyset$ which is a contradiction.
Therefore, there is a sequence $p_n\in Per(X)$ converging to $w$.
If $W^u(p_n)\cap B_\delta(Sing(X))\neq\emptyset$ for infinitely many $n$'s we obtain from the fact that $\delta$ is an intersection number
that $W^u(p_n)\cap W^s(Sing(X))\neq\emptyset$ for such integers $n$.
Applying the Inclination Lemma as before we obtain that $x\in Cl(W^s(Sing(X)))$, a contradiction.
From this we conclude that
$Cl(W^u(p_n))\cap B_\delta(Sing(X))=\emptyset$ for all $n$ large.
In particular, every $p_n$ belongs to
the set $H_\delta$ defined in (\ref{the set}) which is hyperbolic, and so,
the unstable manifold $W^u(p_n)$ has uniformly large size for large $n$.
As both $p_n$ and $x_n$ converges to $w$ we obtain that
there is a point in the positive orbit of $x$ whose omega-limit set is contained
in $Cl(W^u(p_n))$ for some $n$. It follows that $\omega(x)\subset Cl(W^u(p_n))$
and, then, $Cl(W^u(p_n))\cap B_\delta(Sing(X))\neq\emptyset$  which is absurd.
This contradiction concludes the proof.
\end{proof}

To prove Corollary \ref{thA} we need the following generalization of Proposition 1 in \cite{ams}.

\begin{lemma}
\label{dense}
Every vector field $X$ of a compact manifold $M$
exhibiting a finite collection of saddle-type hyperbolic attractors and singularities
whose basins form a dense subset of $M$ is
sensitive to the initial conditions.
\end{lemma}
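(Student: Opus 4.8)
The plan is to extract a single sensitivity constant $\delta$ from the internal expansion of the saddle-type attractors and then to route every point to that expansion by exploiting the density of the basins. Write $A_1,\dots,A_k$ for the attractors and $\sigma_1,\dots,\sigma_m$ for the singularities. Each $A_i$ is saddle-type, so $E^u\neq0$; hence $A_i$ is not a single orbit, and since an attractor is transitive by definition we get $\operatorname{diam}(A_i)>0$. I would set $\delta=\tfrac14\min_i\operatorname{diam}(A_i)$, a number depending only on the attractors (so it will serve as a uniform constant for all $x$). Since each listed singularity is a non-attracting hyperbolic singularity, its basin is exactly $W^s(\sigma_j)$, a proper immersed submanifold and hence nowhere dense; the finite union of these basins is therefore nowhere dense, and from the hypothesis that the total basin is dense it follows that the union $O$ of the \emph{open} attractor basins is already dense in $M$.

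The analytic heart is the following expansion statement: for each $i$ and each nonempty open set $V$ contained in the basin of $A_i$, one has $\limsup_{t\to\infty}\operatorname{diam}(X_t(V))\geq\operatorname{diam}(A_i)$. To prove it I would first shrink $V$ so that $X_{t_0}(V)$ lies inside a trapping neighborhood $U_i$ of $A_i$ for some $t_0\geq0$. The open set $X_{t_0}(V)$ then contains a small embedded disk $D$ of dimension $\dim E^u$ transverse to the strong stable foliation $W^{ss}$. By the Inclination Lemma \cite{kh} the forward iterates of $D$ accumulate (on compact parts, in the $C^1$ sense) on the unstable manifolds of points of $A_i$; since $A_i$ is a transitive hyperbolic attractor, those unstable manifolds are dense in $A_i$. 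Consequently, for any prescribed $\eta>0$ some forward iterate of $D$ is $\eta$-dense in $A_i$, which forces $\operatorname{diam}(X_s(D))\geq\operatorname{diam}(A_i)-2\eta$ for suitable $s$, giving the claim.

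The conclusion is then a triangle-inequality argument. Given any $x\in M$ and any neighborhood $U$, the density of $O$ lets me choose a nonempty open $V\subseteq U$ contained in the basin of some $A_i$. Because $2\delta<\operatorname{diam}(A_i)$, the expansion statement provides $T\geq0$ and points $y_1,y_2\in V$ with $d(X_T(y_1),X_T(y_2))>2\delta$. If both satisfied $d(X_T(x),X_T(y_\ell))\leq\delta$ we would contradict this inequality, so some $\ell$ has $d(X_T(x),X_T(y_\ell))>\delta$ with $y_\ell\in U$; thus $\delta$ is a sensitivity constant. Note that $x$ need not lie in any basin — only the auxiliary point $y_\ell$ does — which is why this works uniformly.

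The main obstacle is the expansion statement of the second paragraph, namely that open subsets of the basin have forward images whose diameters approach $\operatorname{diam}(A_i)$; everything rests on combining transitivity of the hyperbolic attractor (density of its unstable manifolds) with the Inclination Lemma, and this is precisely where the hypothesis \emph{saddle-type} ($E^u\neq0$) is indispensable, since for a sink the images contract and sensitivity is false. A secondary point to verify carefully is that the chosen $\delta$ genuinely depends only on the finite family $\{A_i\}$, so that a single constant works for every $x\in M$.
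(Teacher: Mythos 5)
There is a genuine gap, and it is located exactly where you dismiss the singularities. You assert that the basin of each saddle singularity, being $W^s(\sigma_j)$, is ``a proper immersed submanifold and hence nowhere dense,'' and conclude that the union $O$ of the attractor basins must already be dense. But an injectively immersed stable manifold is only \emph{meager} (a countable union of nowhere dense embedded disks $X_{-t}(W^s_{loc}(\sigma_j))$); its closure can perfectly well have nonempty interior, and in the situations this lemma is designed for it typically does. The geometric Lorenz attractor is the standard counterexample: there the relevant finite collection consists of the singularity alone, $W^s(\sigma)$ is dense in the trapping region, and the union of saddle-type hyperbolic attractor basins is empty. So from ``total basin dense'' and ``singularity basins meager'' you cannot deduce ``attractor basins dense''; the density may be carried entirely by the singularities. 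Your argument therefore proves sensitivity only on the (possibly empty, possibly non-dense) set where attractor basins accumulate, and in the extreme case $k=0$ your constant $\delta=\tfrac14\min_i\operatorname{diam}(A_i)$ is not even defined.

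The part of your argument that does work --- the expansion statement for a nontrivial transitive hyperbolic attractor via the Inclination Lemma and density of unstable manifolds, followed by the triangle inequality --- is essentially the standard fact the paper simply quotes from Palis--Takens (sensitivity of $X$ on a neighborhood $B_{\beta_i}(A_i)$), so that half is fine, modulo the small point that your transversal disk should have dimension $\dim E^u+1$ to be transverse to the strong stable foliation. What is missing is the complementary case, which the paper treats separately: if no point of the given neighborhood $U$ of $x$ lies in an attractor basin, then by hypothesis $W^s(\{\sigma_1,\dots,\sigma_l\})\cap U$ is dense in $U$, and one separates orbits by pairing a point whose forward orbit converges to some $\sigma_i$ with one whose forward orbit does not (using that $W^s(\sigma_i)$ has empty interior to find the latter when $x$ itself converges to $\sigma_i$). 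You need an argument of this kind --- your current proof cannot be repaired merely by adjusting $\delta$, because its first reduction is false.
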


\begin{proof}
If $X$ has no saddle-type hyperbolic attractors, then the result follows from Proposition 1 in \cite{ams}.
So, we can assume that $X$ has at least one saddle-type hyperbolic attractor. 

Let $\{A_1,\cdots, A_r\}$ and $\{\sigma_1,\cdots, \sigma_l\}$ be the collection of saddle-type hyperbolic attractors and singularities of $X$
whose basins form a dense subset of $M$.
As is well-known (p.9 in \cite{pt}) for every $i=1,\cdots, r$ there is $\beta_i>0$
such that $X$ restricted to $B_{\beta_i}(A_i)$ is sensitive to the initial conditions. Let $\delta_i$ be the corresponding sensitivity constant for $i=1,\cdots, r$.

To conclude the proof we shall prove that any positive number $\delta$ less than
$$
\min\left\{\frac{\beta_1}{2},\cdots, \frac{\beta_r}{2},\delta_1,\cdots,\right.$$
$$
\left.
\delta_r,\min\{d(B,C):B,C\in
\{A_1,\cdots, A_r,\sigma_1,\cdots, \sigma_l\}, B\neq C\}\right\}
$$
is a sensitivity constant of $X$.

Indeed, take $x\in M$ and suppose by contradiction that there is a neighborhood $U$ of
$x$ such that $d(X_t(x),X_t(y))\leq \delta$ for every $t\geq0$.
Suppose for a while that there is $y\in U$ such that $\omega(y)\subset A_i$ for some $i=1,\cdots, r$.
Then, $d(X_t(y), A_i)<\frac{\beta_i}{2}$ for some $t\geq 0$ so
$$
d(X_t(x),A_i)\leq d(X_t(x),X_t(y))+d(X_t(y),A_i)\leq \delta+\frac{\beta_i}{2}<\frac{\beta_i}{2}+\frac{\beta_i}{2}=\beta_i
$$
thus $X_t(x)\in B_{\beta_i}(A_i)$. From this we can find $T\geq t$ and also $z\in U$ such that
$d(X_T(x),X_T(z))\geq \delta_i\geq \delta$.
Therefore, we can assume that there is no $y\in U$ within the union of the basins of the attractors
$\{A_1,\cdots, A_r\}$, and so,
$W^s(\{\sigma_1,\cdots, \sigma_l\})\cap U$ is dense in $U$ by the hypothesis.
Now we can proceed as in the proof of Proposition 1 in \cite{ams} to obtain the desired contradiction.

More precisely, we have two possibilities, namely, either $x\in W^s(\sigma_i)$ for some $i=1,\cdots, l$ or not.
In the first case we can select $y\in U$ outside $W^s(\sigma_i)$ since $W^s(\sigma_i)$ has no interior (recall $\sigma_i$ is saddle-type).
Since the positive orbit of $x$ converges to $\sigma_i$, and that of $y$ does not, we eventually find $t > 0$
such that $d(X_t(x),X_t(y))\geq \delta$ which is absurd.
In the second case can use the hypothesis to select $y\in W^s(\sigma_i)\cap U$ for some $i=1,\cdots, l$ since
$W^s(\{\sigma_1,\cdots, \sigma_l\})\cap U$ is dense in $U$. Again we argue that since the
positive orbit of $y$ converges to $\sigma_i$, and that of $x$ does not, we eventually find $t > 0$ such that
$d(X_t(x),X_t(y))\geq \delta$ which is absurd too.
These contradictions prove that $\delta$ as above is a sensitivity constant of $X$ and the result follows.
\end{proof}

\section{Proof of Theorem \ref{thAA} and Corollary \ref{thA}}

\begin{proof}[Proof of Theorem \ref{thAA}]
Let $X$ be a sectional-Anosov flow of a compact $3$-manifold.
By Lemma \ref{thedelta} we have that $X$ has an intersection number $\delta$ and by Lemma \ref{ass2} we have that
$X$ satisfies (C$)_\delta$. For such a $\delta$ we let $H_\delta$ be as in (\ref{the set}).

Now take $x\not\in Cl(W^s(Sing(X)))$. Since $Cl(W^s(Sing(X)))$ is closed
there is a neighborhood $U$ of $x$ such that
$U\cap Cl(W^s(Sing(X)))=\emptyset$. By (C$)_\delta$ we have
$\omega(y)\cap B_\delta(Sing(X))=\emptyset$ and then
$\omega(y)\subset H_\delta$ for every $y\in U$. But
$H_\delta$ is hyperbolic, so, there is an open and dense subset of
$U$ all of whose points belong to the basin of a hyperbolic attractor of $X$ in $H_\delta$.
This proves that the union of the basins of the hyperbolic attractors form together with
$W^s(Sing(X))$ a dense subset of $M$. As the union of the stable manifolds of the non-Lorenz-like singularities is nowhere dense (c.f. \cite{bm}, \cite{bm3})
we obtain that the union of the basins of the hyperbolic attractors and the Lorenz-like singularities is dense in $M$.
As $X$ has only a finite number of both hyperbolic attractors (by Corollary \ref{**}) and Lorenz-like singularities (for they are hyperbolic) we are done.
\end{proof}

\begin{proof}[Proof of Corollary \ref{thA}]
By Theorem \ref{thAA} we have that every sectional-Anosov flow of a compact $3$-manifold
satisfies the hypotheses of Lemma \ref{dense}.
So, the result follows from this lemma.
\end{proof}


\begin{thebibliography}{HH}





\bibitem{abs}
Afraimovic, V.S.,  Bykov, V.V.;, Shilnikov, L.P.,
The origin and structure of the Lorenz attractor,
{\em Dokl. Akad. Nauk SSSR} 234 (1977), no. 2, 336--339. 



\bibitem{a}
Araujo, A.,
Exis\^encia de atratores hiperb\' olicos para difeomorfismos de superficies (in portuguese),
Prepublica\c c\~oes IMPA S\' erie F, no. 23/88, 1988.



\bibitem{ams}
Arbieto, A., Morales, C.A., Senos, L.,
On the sensitivity of sectional-Anosov flows,
{\em Math. Z.} 270 (2012), no. 1-2, 545--557.


\bibitem{bbcds}
Banks, J., Brooks, J., Cairns, G.,  Davis, G.,  Stacey, P.,
On Devaney's definition of chaos,
{\em Amer. Math. Monthly} 99 (1992), no. 4, 332--334.




\bibitem{bm}
Bautista, S., Morales, C.A.,
{\em Lectures on sectional-Anosov flows}. Preprint IMPA S\'erie D 84 (2011).


\bibitem{bm2}
Bautista, S.,  Morales, C.A.,
A sectional-Anosov connecting lemma,
{\em Ergodic Theory Dynam. Systems} 30 (2010), no. 2, 339--359. 


\bibitem{bm1}
Bautista, S., Morales, C.A.,
Characterizing omega-limit sets which are closed orbits,
{\em J. Differential Equations} 245 (2008), no. 3, 637--652.

\bibitem{bm3}
Bautista, S., Morales, C.A.,
Existence of periodic orbits for singular-hyperbolic sets,
{\em Mosc. Math. J.} 6 (2006), no. 2, 265--297.


\bibitem{bmp}
Bautista, S., Morales, C., Pacifico, M.J.,
{\em On the intersection of homoclinic classes on singular-hyperbolic sets},
{\em Discrete Contin. Dyn. Syst.} 19 (2007), no. 4, 761--775. 






\bibitem{bpv}
Bonatti, C., Pumari\~no, A., Viana, M.,
Lorenz attractors with arbitrary expanding dimension
{\em C. R. Acad. Sci. Paris S\'er. I Math.} 325 (1997), no. 8, 883--888.


\bibitem{cm}
Carballo, C., M., Morales, C., A.,
{\em Omega-limit sets close to singular-hyperbolic attractors},
Illinois J. Math.  48  (2004),  no. 2, 645--663.



\bibitem{cp}
Crovisier, S., Pujals, E.R.,
Essential hyperbolicity and homoclinic bifurcations: a dichotomy phenomenon/mechanism for diffeomorphisms,
Preprint arXiv:1011.3836v1 [math.DS] 16 Nov 2010.




\bibitem{g}
Guckenheimer, J.,
Sensitive dependence to initial conditions for one-dimensional maps,
{\em Comm. Math. Phys.} 70 (1979), no. 2, 133--160. 


\bibitem{gw}
Guckenheimer, J.; Williams, R., Structural stability of Lorenz attractors,
{\em Publ. Math. IHES} 50 (1979), 59--72.




\bibitem{hps}
Hirsch, M., W., Pugh, C., C., Shub, M.,
Invariant manifolds,
{\em Lecture Notes in Mathematics}, Vol. 583. Springer-Verlag, Berlin-New York, 1977.




\bibitem{kh}
Katok, A., Hasselblatt, B.,
{\em Introduction to the modern theory of dynamical systems}.
With a supplementary chapter by Katok and Leonardo Mendoza.
Encyclopedia of Mathematics and its Applications, 54. Cambridge
University Press, Cambridge, 1995.


\bibitem{lp}
Labarca, R., Pacífico, M.J.,
Stability of singularity horseshoes,
{\em Topology} 25 (1986), no. 3, 337--352. 





\bibitem{l}
Lorenz, E.N.,
Predictability: Does the flap of a butterfly's wings in Brazil set off a tornado in Texas?,
{\em American Association for the Advancement of Science}, December 29, 1972.

\bibitem{memo}
Metzger, R., Morales, C.A.,
Sectional-hyperbolic systems,
{\em Ergodic Theory Dynam. Systems} 28 (2008), no. 5, 1587--1597. 


\bibitem{m2}
Morales, C.A.,
An improved sectional-Anosov closing lemma,
{\em Math. Z.} 268 (2011), no. 1-2, 317--327. 


\bibitem{m1}
Morales, C.A.,
Sectional-Anosov flows,
{\em Monatsh. Math.} 159 (2010), no. 3, 253--260. 

\bibitem{mm}
Morales, C.A.,
Singular-hyperbolic attractors with handlebody basins,
{\em J. Dyn. Control Syst.} 13 (2007), no. 1, 15--24.  


\bibitem{mmm}
Morales, C.A.,
Examples of singular-hyperbolic attracting sets,
{\em Dyn. Syst.} 22 (2007), no. 3, 339--349.


\bibitem{m3}
Morales, C.A.,
Strong stable manifolds for sectional-hyperbolic sets,
{\em Discrete Contin. Dyn. Syst.} 17 (2007), no. 3, 553--560.



\bibitem{mp}
Morales, C.A., Pacifico, M.J.,
A dichotomy for three-dimensional vector fields,
{\em Ergodic Theory Dynam. Systems} 23 (2003), no. 5, 1575--1600. 


\bibitem{mpp}
Morales, C. A., Pacifico, M. J.,  Pujals, E. R.,
Singular hyperbolic systems,
{\em Proc. Amer. Math. Soc.} 127 (1999), no. 11, 3393--3401.





\bibitem{pt}
Palis, J.,; Takens, F.,
{\em Hyperbolicity and sensitive chaotic dynamics at homoclinic bifurcations.
Fractal dimensions and infinitely many attractors}, Cambridge Studies in Advanced Mathematics, 35. Cambridge University Press, Cambridge, 1993.



\bibitem{p}
Polo, F.,
Sensitive dependence on initial conditions and chaotic group actions,
{\em Proc. Amer. Math. Soc.} 138 (2010), no. 8, 2815--2826. 


\bibitem{r}
Ruelle, D.,
Microscopic fluctuations and turbulence,
{\em Phys. Lett. A} 72 (1979), no. 2, 81--82. 


\bibitem{s}
Sinai, Y. G.,
Chaos theory yesterday, today and tomorrow,
{\em J. Stat. Phys.} 138 (2010), no. 1-3, 2--7. 




\end{thebibliography}
\end{document}